\newtheorem{theorem}{Theorem}
\newtheorem{lemma}[theorem]{Lemma}
\newtheorem{cor}[theorem]{Corollary}
\newcounter{listingcounter}
\title{\bf On the number of\\semi-magic squares of order 6}
\author{Artem Ripatti\\
\small Ufa State Aviational Technical University\\[-0.8ex]
\small M.Akmullah Bashkir State Pedagogical University\\[-0.8ex]
\small Ufa, Russia\\
\small\tt ripatti@inbox.ru
}
\date{\today}
\begin{document}

\maketitle

\textbf{Abstract}: We present an exact method for counting semi-magic squares of order 6. Some theoretical investigations about the number of them and a probabilistic method are presented. Our calculations show that there are exactly $94\,590\,660\,245\,399\,996\,601\,600$ such squares up to reflections and rotations.

\medskip

\textbf{Keywords}: semi-magic squares, magic squares, enumeration

\section*{Introduction}

\textit{A semi-magic square} is a matrix of size $n \times n$ filled by numbers $1, \cdots , n^2$ in such a way that each number $1, \cdots , n^2$ is used exactly once and the sum over elements of each row and each column is equal to \textit{the magic constant} $Z=n(n^2+1)/2$. A semi-magic square is \textit{magic} if sums over elements of main diagonals are equal to the magic constant too. A magic square is \textit{panmagic} (\textit{pandiagonal}, \textit{diabolical}, \textit{Nasik}) if sums over elements of all $2(n-1)$ broken diagonals are also equal to $Z$.  Here are examples of semi-magic, magic and panmagic squares of order 5:
$$
\left[\begin{matrix}
4 & 12 & 25 & 8 & 16 \\
23 & 6 & 19 & 2 & 15 \\
11 & 24 & 7 & 20 & 3 \\
17 & 5 & 13 & 21 & 9 \\
10 & 18 & 1 & 14 & 22 \\
\end{matrix}\right]
\quad
\left[\begin{matrix}
13 & 11 & 15 & 16 & 10 \\
17 & 5 & 19 & 18 & 6 \\
14 & 22 & 1 & 8 & 20 \\
9 & 24 & 7 & 21 & 4 \\
12 & 3 & 23 & 2 & 25 \\
\end{matrix}\right]
\quad
\left[\begin{matrix}
1 &  15 & 24 & 8 & 17 \\
23 & 7 & 16 & 5 & 14 \\
20 & 4 & 13 & 22 & 6 \\
12 & 21 & 10 & 19 & 3 \\
9 & 18 & 2 & 11 & 25 \\
\end{matrix}\right]
$$

Table~\ref{table_enum} shows the currently known results on enumerating squares of these types. We remark that squares that can be transformed into one another by reflections and rotations are considered to be the same, and the table represents only the number of essential different squares relative to these operations. More results can be found on the website of Walter Trump~\cite{trump2018}.
\begin{table}[htp]
\caption{Known the numbers of squares of different types}
\label{table_enum}\centering\small%
\begin{tabular}{ccccc}
$n$ & semi-magic & magic & panmagic \\
& A & B & C \\
\hline
$3$ & $9$ & $1$ & 0 \\
$4$ & $68\,688$ & $880$ & $48$ \\
$5$ & $579\,043\,051\,200$ & $275\,305\,224$ & $3600$ \\
$6$ & $9.459156(51) \cdot 10^{22}$ &  $1.775399(42) \cdot 10^{19}$ & $0$ \\
$7$ & $4.2848(17) \cdot 10^{38}$ & $3.79809(50) \cdot 10^{34}$ & $1.21(12) \cdot 10^{17}$ \\
\hline
\end{tabular}

Here $1.2345(67) \cdot 10^{8}$ means that the exact value fits into the interval

$(1.2345 \pm 0.0067) \times 10^8$ with probability $99.7\%$ ($3 \sigma$ error).
\end{table}

The unique magic square of order $3$ (B3) was found by Luo Shu (as it is known). The oldest known image of this square on a tortoise shell is dated 2200~BCE. B4 was found by Bernard Fr\'enicle de Bessy in 1693~\cite{de1693quarrez}. First analytic proof of B4 was made by Kathleen Ollerenshaw and Hermann Bondi in 1982~\cite{ollerenshaw1982magic}. Richard Schroeppel calculated B5 in 1973 using a computer program. His result was published in 1976 by Martin Gardner~\cite{gardner1976}.

C5 is equal to the number of \textit{regular panmagic squares} which can be generated using Graeco-Latin squares, as Leonhard Euler pointed in 1779 (published in 1782~\cite{euler1782}). There are no panmagic squares of order 6 (C6), which has been proved by Andrew Hollingworth Frost in 1878~\cite{frost1878}, and more elegantly by Charles~Planck in 1919~\cite{planck1919}.

A4 was found by Mutsumi Suzuki in 1997~\cite{suzuki1997}. Walter Trump calculated A5 in 2000~\cite{trump2018}.

In 1998 Klaus Pinn and Christian Wieczerkowski~\cite{pinn1998} estimated B6 as $(0.17745 \pm 0.00016) \times 10^{20}$ using a method from statistical physics called ``parallel tempering Monte Carlo''. They also estimated B7 as $(0.3760 \pm 0.0052) \times 10^{35}$ using the same method. Walter Trump improved on these results using ``Monte Carlo backtracking'' method, see Table~\ref{table_enum}. He also estimated A6, A7 and C7~\cite{trump2018}. We remark that in~\cite{trump2018} the estimate of A6 is already replaced by the exact result.

In this paper we focus on A6: the number of semi-magic squares of order 6. We present an exact method for calculating this number which is a variation of the meet-in-the-middle approach. The method is described in Section~\ref{sec_exact}. We use this method to calculate A6, and in order to partially verify it we do some theoretical investigations in Section~\ref{sec_theo} and describe a probabilistic method in Section~\ref{sec_prob}. The computing results are presented in Section~\ref{sec_results}. Finally, we draw a conclusion in Section~\ref{sec_conc}.

\section{Exact method} \label{sec_exact}

Let $S$ be a semi-magic square. Then $S(i,j)$ is the number on the intersection of row $i$ and column $j$, $r_i(S)$ is the set of numbers of $S$ in row $i$, and $c_j(S)$ is the set of numbers of $S$ in column $j$. $U(S) = r_1(S) \cup r_2(S) \cup r_3(S)$ and $L(S) = r_4(S) \cup r_5(S) \cup r_6(S)$ are the upper and lower halves of $S$.

Let $A$ be $\{ 1, 2, \cdots , 36 \}$ and $Q$ be the set of all semi-magic squares of order 6 up to reflections and rotations. Our goal is to find $|Q|$.

\subsection{Canonical semi-magic squares}

We transform each semi-magic square into \textit{a canonical} form. First, we sort all rows by the their minimal element. After that, we sort all columns by the sum of their upper triplet. In case of a tie we sort them by the upper element. The following picture shows an example of such a transformation:
$$
\left[\begin{matrix}
\bf{1} &  4 & 16 & 32 & 27 & 31 \\
34 & 35 &  7 & 19 & 13 & \bf{3} \\
15 & \bf{12} & 20 & 28 & 14 & 22 \\
29 & 18 & 24 & \bf{2} & 30 &  8 \\
26 & \bf{9} & 23 & 25 & 17 & 11 \\
 6 & 33 & 21 & \bf{5} & 10 & 36 \\
\end{matrix}\right]
\to
\left[\begin{matrix}
\bf{1} &  4 & 16 & 32 & 27 & 31 \\
29 & 18 & 24 & \bf{2} & 30 &  8 \\
34 & 35 &  7 & 19 & 13 & \bf{3} \\
 6 & 33 & 21 & \bf{5} & 10 & 36 \\
26 & \bf{9} & 23 & 25 & 17 & 11 \\
15 & \bf{12} & 20 & 28 & 14 & 22 \\
\end{matrix}\right]
\to
$$
$$\text{sums over upper triplets are }(64, 57, 47, 53, 70, 42)$$
$$
\to
\left[\begin{matrix}
31 & 16 & 32 & 4 & \bf{1} & 27 \\
8 & 24 & \bf{2} & 18 & 29 & 30 \\
\bf{3} & 7 & 19 & 35 & 34 & 13 \\
36 & 21 & \bf{5} & 33 & 6 & 10 \\
11 & 23 & 25 & \bf{9} & 26 & 17 \\
22 & 20 & 28 & \bf{12} & 15 & 14 \\
\end{matrix}\right]
$$

Each canonical semi-magic square represents exactly $(6!)^2$ semi-magic squares. We can calculate the number of canonical semi-magic squares and then multiply it by $(6!)^2/8$ to get the total number of semi-magic squares $|Q|$ up to reflections and rotations.

\subsection{Classes of canonical semi-magic squares}

Let $C$ be the set of all canonical semi-magic squares of order 6. We split the set $C$ into subsets $C_m$, $m \in M$, where $M = \{ x \; | \; x \subset A, |x|=18 \}$ and $m = U(c)$ for all $c \in C_m$.

$|M|=C_{36}^{18} \approx 9 \times 10^9$, but $|C_m| = 0$ for most $m \in M$. For example, $\sum_{i \in m}i$ must be $333$. There are exactly $113\,093\,022$ such sets.

We consider a set $M' \subset M$ such that each its element $m \in M'$ has the following property: there exist two partitions $m = r_1 \cup r_2 \cup r_3$ and $A \backslash m = r_4 \cup r_5 \cup r_6$ ($r_i \cap r_j = \varnothing$ for all $i \neq j$) such that:

\begin{enumerate}
\item $|r_i|=6$ for all $1 \le i \le 6$,
\item $\sum_{a \in r_i}a=111$ for all $1 \le i \le 6$,
\item $\min{r_1} < \min{r_2} < \min{r_3} < \min{r_4} < \min{r_5} < \min{r_6}$.
\end{enumerate}

$|C_m|=0$ for all $m \in M \backslash M'$.

\subsection{Generating $M'$}

First, we generate a set $S = \{ s \; | \; s \subset A, |s|=6, \sum_{i \in s}i=111 \}$ using a simple backtracking algorithm. Each element of $S$ is called {\it a series}, and $|S|=32\,134$.

Then, we generate a set $M^* \subset M$ such that $\sum_{i \in m}i = 333$ for all $m \in M^*$, also using a simple backtracking algorithm. Now we need to check all $113\,093\,022$ such sets according to rules described in the previous subsection.

We do the check in a very straightforward way: we try splitting the current $m \in M^*$ into series $s_1$, $s_2$ and $s_3$ ($s_1,s_2,s_3 \in S$, $s_1 \cup s_2 \cup s_3 = m$), such that $\min s_1 < \min s_2 < \min s_3 < \min A \backslash m$. If we have at least one such split, we additionally check that $A \backslash m$ can be split into series $s_4$, $s_5$ and $s_6$ ($s_4,s_5,s_6 \in S$, $s_4 \cup s_5 \cup s_6 = A \backslash m$), such that $\min s_4 < \min s_5 < \min s_6$.

There are two tricks to make the check faster:

{\bf Trick 1.} The straightforward way to check that $m \in M^*$ can be split into three series $s_1$, $s_2$ and $s_3$ is running two nested loops that iterate over series $s_1$ and $s_2$ from $S$ while checking that $s_1 \subset m$, $s_2 \subset m$, $s_1 \cap s_2 = \varnothing$ (obviously, $s_3 = m \backslash s_1 \backslash s_2$ is a series too). The trick is to replace $S$ by $S' = \{ s \; | \; s \in S, s \subset m \}$ here.

{\bf Trick 2.} All operations on sets can be implemented with binary masks. Namely, the $\min$ operation can be implemented in the following way:
\begin{verbatim}
min_bit = (mask & (mask - 1)) ^ mask;
\end{verbatim}
Actually, it computes not the minimal element, but a mask with a single bit set that corresponds to the minimal element. But $a < b$ iff $2^a < 2^b$, so we can use it for comparison.

Mixing all things together, we can generate the set $M'$ in about $4$ hours.

The computation shows that $|M'| = 9\,366\,138$.

Now, we need to determine $|C_m|$ for all $m \in M'$.

\subsection{Counting squares of a single class}

Consider some $m \in M'$. First, we generate all partitions of $m$ into three series $s_1$, $s_2$ and $s_3$, such that $\min s_1 < \min s_2 < \min s_3 < \min A \backslash m$. Let $U$ be the set of all such partitions. Similarly, we generate a set $L$ of all partitions of $A \backslash m$ into three series $s_4$, $s_5$ and $s_6$, such that $\min s_4 < \min s_5 < \min s_6$. To generate them fast we use the tricks described in the previous subsection.

For each $(s_1, s_2, s_3) \in U$ we can generate $720 \times 720$ different upper halves of a canonical semi-magic square by the following algorithm:
\begin{enumerate}
\item Write down all elements of $s_1$ into the first row of a semi-magic square in increasing order.
\item Write down all elements of $s_2$ into the second row in one of $6! = 720$ ways.
\item Write down all elements of $s_3$ into the third row in one of $6! = 720$ ways.
\item Calculate sums over all upper triplets.
\item Sort columns by the sums of triplets in increasing order. In case of a tie sort them by the upper element.
\end{enumerate}

Let $P(X) = ( \sum_{i=1}^3 X(i,1), \sum_{i=1}^3 X(i,2), \cdots, \sum_{i=1}^3 X(i,6) )$ be {\it a profile} of the upper half of a canonical square $X$.

Similarly, for each $(s_4, s_5, s_6) \in L$ we can generate $720 \times 720$ different lower halves of a canonical semi-magic square, but on step 5 we sort the columns in decreasing order. Let $P'(Y) = ( 111 - \sum_{i=4}^6 Y(i,1), 111 - \sum_{i=4}^6 Y(i,2), \cdots, 111 - \sum_{i=4}^6 Y(i,6) )$ be a profile of a lower half of a canonical square $Y$.

Any two upper and lower halves can be combined together into a valid canonical semi-magic square if they have the same profile. Let $N_m^U(p)$ and $N_m^L(p)$ be the number of upper and lower halves for set $m \in M'$ and profile $p$. Then $|C_m| = \sum_{p \in P} N_m^U(p) N_m^L(p) f(p)$, where $P$ is the set of all possible profiles, and $f(p)$ is the number of  permutations of elements in $p$ that do not change $p$.

$f(p)$ appears from the following observation: for the upper half, in case of tie for sums of triplets, we sort them in determined way; for the lower half in case of tie we may sort them in any order, and every such sorting produces a valid canonical square.

$f(p)$ can be calculated in the following way: first, we split all elements of $p$ into groups of equal numbers, and then take a product of factorials of the groups' sizes.

So, we have the following algorithm:
\begin{enumerate}
\item Generate the set $U$ of all partitions of $m$ into series $s_1$, $s_2$ and $s_3$.
\item For all $(s_1, s_2, s_3) \in U$ generate all upper halves, calculate their profiles and for every appeared profile determine the number of upper halves having such profile using a hash table.
\item Iterate over all elements of the hash table and for every key $p$ multiply its value by $f(p)$.
\item Generate the set $L$ of all partitions of $A \backslash m$ into series $s_4$, $s_5$ and $s_6$.
\item For all $(s_4, s_5, s_6) \in L$ generate all lower halves, calculate their profiles $p$ and get a sum of values from the hash table found by keys $p$.
\end{enumerate}

\subsection{Technical notes}

Any implementation of the algorithm described in the previous subsection may calculate $|C_m|$ for any class $m \in M'$ in several minutes. Our implementation may handle each of them in about $15$-$20$ seconds because of special technical optimizations. We describe some details about them here.

\subsubsection{Hash table}

A hash table is a data structure for fast searching for stored {\it values} by {\it keys}. In our case keys are the profiles and values are the numbers of square halves with corresponding profiles.

Consider a profile $p = (p_1, p_2, \cdots, p_6)$. We have $p_1 \le p_2 \le p_3 \le p_4 \le p_5 \le p_6$, and $\sum_{i=1}^6 p_i = 333$. By the pigeonhole principle, $p_1 < 56$, $p_2 < 67$ and $p_3 < 84$. Also, $p_4 < 106$ and $p_5 < 106$ because $36 + 35 + 34 = 105$. $p_6 = 333 - \sum_{i=1}^5 p_i$. So, any profile can be encoded as an integer less than $56 \times 67 \times 84 \times 106 \times 106 = 3\,541\,227\,648 < 2^{32}$. This encoding allows us to use as key an unsigned 32-bit integer. Due to a minor mistake, there is $85$ in our code instead of $84$, but it changes nothing.

For the values we can use unsigned 32-bit integers too. For every class $|C_m|$ the total number of insertions into the hash table is exactly the number of partitions of $m$ into three series multiplied by $720 \times 720$. The maximal number of partitions over all classes is $2704$, so the maximum possible value is $2704 \times 720 \times 720 = 1\,401\,753\,600 < 2^{32}$, and there is no overflow even if all square halves over all partitions have the same profile.

There may be an overflow on the 3rd step of the main algorithm, but we can consider the maximum value in the hash table before this step to verify that there will be no overflows here. The maximum value over all classes was $68\,225$, so even if it is multiplied by $6! = 720$, it still fits into 32-bit integer. Actually, the maximum value we have got after multiplication was $86\,328$.

Let $k$ be an encoded profile $p$. The best hash function of those we tried was $h(k) = k \land (2^{24}-1)$, or just the last 24 bits of $k$. It is very fast for computing, and keys $k$ we try to insert into the hash table are different enough to make insignificant number of collisions. We used a simple hash table with open addressing and linear probing. The size of the hash table is $2^{24} + 1024$, where $1024$ is a ``tail'' in the end: our hash table is not cycled from end to beginning because checking whether we need to jump to the beginning during the probing requires additional time. We check for the ``hash table overflow'' only once: before the 3rd step of the main algorithm. Cell $2^{24} + 512$ was never reached.

The last trick we used here was partial sorting of queries before inserting and searching. For every partition of $m$ we have $720 \times 720$ queries. Before performing these queries we sort them by the first $12$ bits of $h(k)$ using counting sort. Number $12$ is chosen because the modern computers typically have $2^{12}$ lines in the L2 cache, so the counting sort works very fast. The partially sorted list of queries significantly reduces the number of cache misses (again, because the L2 cache has $2^{12}$ lines, it almost always covers $2^{12}$ consecutive cells of the hash table), and it gives a speedup of about 8 times.

\subsubsection{Sorting six integers}

For every partition of $m$ (and $A \backslash m$) we need to sort six integers $720 \times 720$ times. To do it fast we use sorting networks:
$$\includegraphics[scale=1.5]{sortnet.mps}$$

One may use the left sorting network that requires only $12$ comparators, and implement them directly as compares and swaps. But we use the right sorting network with $14$ comparators, because it can be processed in $5$ passes instead of $6$, if we process independent comparators in parallel.

To process comparators in parallel, we use Streaming SIMD Extensions (SSE). The numbers we sort are small, so we pack all them into $6$ bytes. Because SSE instructions may operate on $128$-bit words ($16$ bytes), we actually process two sorting networks in parallel, sorting $2$ sets of $6$ numbers. One may use AVX2 instructions (they operate on $256$-bit words) to sort up to $5$ sets of $6$ numbers in parallel manner, but we didn't implement it.

We present the code of such sorting in Listing~\ref{list_sse}, details are left to the reader.

\section{Theoretical investigations} \label{sec_theo}

First of all, we need a fact that there are no panmagic squares of order 6. It has been proved by Andrew Hollingworth Frost~\cite{frost1878} and later by Charles Planck~\cite{planck1919}, as it is mentioned in the Introduction. The latter proof is so short and simple, so we describe it here.

\begin{lemma}[Charles Planck] \label{lemma_pan}
There are no panmagic squares of order 6.
\end{lemma}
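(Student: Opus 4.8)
The plan is to prove that a pandiagonal (panmagic) square of order $6$ cannot exist by deriving a contradiction modulo a small number. The key idea is that in a panmagic square of order $n$, every one of the $n$ rows, $n$ columns, and $2n$ broken diagonals (including the two main diagonals) sums to the magic constant $Z = n(n^2+1)/2$. For $n=6$ we have $Z = 6\cdot 37/2 = 111$. I would exploit the fact that the broken diagonals partition the cells in a highly structured way, so that summing several of them counts the same entries with controlled multiplicities, and then reduce the resulting identity modulo a convenient modulus (either $2$ or $3$) to reach an impossibility.

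First I would set up coordinates: index the cells by $(i,j)$ with $i,j \in \{0,1,\dots,5\}$ taken modulo $6$. A broken diagonal in one direction is a set of cells $\{(i, i+c \bmod 6) : i = 0,\dots,5\}$ for fixed offset $c$, and in the other direction $\{(i, c-i \bmod 6) : i = 0,\dots,5\}$. The crucial observation is that the diagonal offsets split into residue classes modulo a divisor of $6$: for instance, grouping the six diagonals of one family by the parity of the offset $c$, the three ``even'' diagonals together cover exactly the cells where $i+j$ is even, and the three ``odd'' diagonals cover the cells where $i+j$ is odd. Because $6$ is even, each of these color classes contains exactly $18$ cells, and each is a disjoint union of three panmagic diagonals, so its entries sum to $3Z = 333$. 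The same decomposition by parity works for rows and columns as well.

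Next I would combine these partial sums to isolate a single parity class of cells and compare it against what the entries $1,\dots,36$ can actually deliver. The sum of all entries is $\tfrac{36\cdot 37}{2} = 666 = 6Z$, and under a checkerboard $2$-coloring the two colors must each sum to $333$ if the square is panmagic, forcing a balance condition. I would then look at the sum over a single ``color'' obtained from the broken diagonals and show that the parity or the residue modulo $3$ of this forced value is incompatible with the residues available from a set of distinct entries in $\{1,\dots,36\}$ placed on that color class. Concretely, Planck's argument isolates a configuration of cells whose required sum has one parity while the cells it must equal have the opposite parity, so no assignment of the distinct numbers $1,\dots,36$ can satisfy all the diagonal constraints simultaneously.

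The main obstacle I anticipate is choosing exactly the right linear combination of diagonal (and possibly row/column) sum constraints so that the overlap multiplicities collapse to a clean parity or $\bmod 3$ statement, rather than a messy identity. Getting the coloring bookkeeping right is what makes or breaks the argument: I would need to verify carefully that the chosen diagonals really do tile a single color class without omission or repetition, since $n=6$ is even and the diagonal structure is more delicate than in the odd case (indeed panmagic squares do exist for $n=5$). Once the tiling is confirmed, the contradiction should follow from a one-line congruence, which matches the remark in the text that Planck's proof is ``so short and simple.''
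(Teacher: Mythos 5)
There is a genuine gap: the one constraint you actually verify does not produce a contradiction, and the step that would produce one is the part you leave unresolved. Your checkerboard decomposition is correct as far as it goes --- the three broken diagonals of even offset tile the cells with $i+j$ even, the three of odd offset tile the cells with $i+j$ odd, so each colour class must sum to $3Z=333$. But $333$ is not incompatible with anything: it is exactly half of $1+2+\cdots+36=666$, and there are many $18$-element subsets of $\{1,\dots,36\}$ with that sum, so no parity or mod-$3$ obstruction arises from a single colour class. You correctly flag that ``choosing exactly the right linear combination'' is the crux, but you never exhibit it, and the checkerboard (two-class) colouring is too coarse to ever yield one.

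The missing idea is a refinement to a \emph{four}-class partition based on the parities of $i$ and $j$ separately (a $2\times 2$ block pattern), not just the parity of $i+j$. Let $A$, $B$, $C$ be the sums over the cells with (odd row, odd column), (odd row, even column), and (even row, odd column) respectively. Then the three odd-indexed rows give $A+B=3Z$, the three odd-indexed columns give $A+C=3Z$, and the three broken diagonals through the cells with $i+j$ odd give $B+C=3Z$. The point of this particular combination is that the fourth class (even row, even column) never appears, while each of $A$, $B$, $C$ appears in exactly two of the three equations; adding them gives $2(A+B+C)=9Z=999$, which is impossible since the left side is even. This is precisely Planck's argument as reproduced in the paper, and it is the concrete linear combination your proposal anticipates but does not supply; without it the proof does not close.
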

\begin{proof}
Let $P$ be a panmagic square, consider the following marking of its cells:
$$
\left[\begin{matrix}
a & b & a & b & a & b \\
c &  & c &  & c & \\
a & b & a & b & a & b \\
c &  & c &  & c & \\
a & b & a & b & a & b \\
c &  & c &  & c & \\
\end{matrix}\right]
$$

Let $A$ be the sum over cells marked with ``$a$'', $B$ be the sum over ``$b$''-cells, and $C$ be the sum over ``$c$''-cells. Then $A+B = 3Z$ for rows 1, 3, and 5, $A+C = 3Z$ for columns 1, 3, and 5, and $B+C = 3Z$ for three broken diagonals (either of ``$\backslash$''-type or ``$\slash$''-type), where $Z = 6(6^2+1)/2 = 111$ is the magic constant. Adding these three equations we get $2(A+B+C) = 9Z$. In the resulting equation the left side is even, while the right side is odd.
\end{proof}

Now we describe transforming of a semi-magic square into \textit{a normalized} form. First, we permute rows and columns in a such way, that number $1$ is placed in the upper left corner. Then we additionally permute rows and columns that numbers in the first row and numbers in the first column are sorted in ascending order. Let $X$ be a square we have got on this step; if $X(1,2) > X(2,1)$, we transpose the square. The following picture shows an example of such transforming:
$$
\left[\begin{matrix}
35 & 12 & 18 & 4 & 9 & 33 \\
34 & 15 & 29 & 1 & 26 & 6 \\
7 & 20 & 24 & 16 & 23 & 21 \\
19 & 28 & 2 & 32 & 25 & 5 \\
13 & 14 & 30 & 27 & 17 & 10 \\
3 & 22 & 8 & 31 & 11 & 36 \\
\end{matrix}\right]
\to
\left[\begin{matrix}
1 & 34 & 15 & 29 & 26 & 6 \\
4 & 35 & 12 & 18 & 9 & 33 \\
16 & 7 & 20 & 24 & 23 & 21 \\
32 & 19 & 28 & 2 & 25 & 5 \\
27 & 13 & 14 & 30 & 17 & 10 \\
31 & 3 & 22 & 8 & 11 & 36 \\
\end{matrix}\right]
\to
$$
$$
\to
\left[\begin{matrix}
1 & 6 & 15 & 26 & 29 & 34 \\
4 & 33 & 12 & 9 & 18 & 35 \\
16 & 21 & 20 & 23 & 24 & 7 \\
27 & 10 & 14 & 17 & 30 & 13 \\
31 & 36 & 22 & 11 & 8 & 3 \\
32 & 5 & 28 & 25 & 2 & 19 \\
\end{matrix}\right]
\to
\left[\begin{matrix}
1 &  4 & 16 & 27 & 31 & 32 \\
6 & 33 & 21 & 10 & 36 & 5 \\
15 & 12 & 20 & 14 & 22 & 28 \\
26 & 9 & 23 & 17 & 11 & 25 \\
29 & 18 & 24 & 30 &  8 & 2 \\
34 & 35 &  7 & 13 & 3 & 19 \\
\end{matrix}\right]
$$

Let $\operatorname{norm}(X)$ be a semi-magic square $X$ in the normalized form.

Let $N$ be the set of all normalized semi-magic squares of order 6. Then, obviously, $|C| = 2|N|$, where $C$ is the set of all canonical semi-magic squares of order 6.

We also consider \textit{a complementing} operation which replaces every number in a square with its complementing number, i.e. every number $x$ is replaced with $37-x$ for order 6 (or by $n^2+1-x$ for order $n$). Let $\operatorname{comp}(X)$ be \textit{a complement} of square $X$. If $X$ is semi-magic, then $\operatorname{comp}(X)$ is semi-magic too.

Now we combine these operations and get a complement-and-normalize operation which operates over $N$: $\operatorname{cn} := \operatorname{norm} \cdot \operatorname{comp} : N \to N$. $\operatorname{cn}$ is an involution, i.e. $\operatorname{cn}(\operatorname{cn}(X)) = X$ for all $X \in N$.
For some normalized semi-magic squares $X$ we have $\operatorname{cn}(X) = X$. We call such squares \textit{self-complement}.

\begin{lemma} \label{lemma_trans}
For any self-complement square $X$ we do not use the transpose operation while performing $\operatorname{cn}(X)$.
\end{lemma}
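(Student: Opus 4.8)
The plan is to argue by contradiction: I would assume that the transpose step \emph{is} used while computing $\operatorname{cn}(X) = \operatorname{norm}(\operatorname{comp}(X))$, and extract a parity obstruction of exactly the flavour of Lemma~\ref{lemma_pan}. First I would record what the normalization does as a single global rearrangement of the cells. Writing $Y = \operatorname{comp}(X)$, so $Y(i,j) = 37 - X(i,j)$, the procedure $\operatorname{norm}$ only ever applies a row permutation $\rho$, a column permutation $\sigma$, and (in step~3) at most one final transpose. Hence \emph{if} the transpose is used, then for all cells $\operatorname{norm}(Y)(i,j) = Y(\rho(j),\sigma(i))$. Since $X$ is self-complement we have $\operatorname{norm}(Y) = X$, and substituting $Y = 37 - X$ gives the key identity $X(i,j) + X(\rho(j),\sigma(i)) = 37$, valid for \emph{every} cell $(i,j)$.

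Next I would study the induced map on cells $\theta(i,j) = (\rho(j),\sigma(i))$. Applying the identity once at $(i,j)$ and once at $\theta(i,j)$ and subtracting yields $X(i,j) = X(\theta^2(i,j))$; because all $36$ entries of $X$ are distinct, this forces $\theta^2 = \operatorname{id}$, i.e.\ $\theta$ is an involution, which by comparing $\theta^2(i,j) = (\rho\sigma(i),\sigma\rho(j))$ with $(i,j)$ forces $\sigma = \rho^{-1}$. Then $\theta(i,j) = (\rho(j),\rho^{-1}(i))$, whose fixed cells are precisely the six cells $(\rho(j),j)$ for $j = 1,\dots,6$.

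The punchline is that such a fixed cell cannot exist: at $(\rho(j),j)$ the identity degenerates to $2\,X(\rho(j),j) = 37$, which has no integer solution (equivalently, no entry can equal its own complement, since $37$ is odd). This ``even equals odd'' contradiction — the very mechanism used in Planck's proof of Lemma~\ref{lemma_pan} — rules out the transpose case entirely, and since the normalization either transposes or does not, this is exactly the assertion that the transpose is never used. The non-transpose case therefore needs no separate treatment.

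The step I expect to be the main obstacle is the first one: carefully checking that in the transpose case the normalization genuinely produces the \emph{global} relation $X(i,j) + X(\rho(j),\sigma(i)) = 37$ on all cells, with $\rho,\sigma$ honest permutations and the transpose applied last. Once that bookkeeping is pinned down, the involution identity, the collapse $\sigma = \rho^{-1}$, and the forced fixed points are all automatic, and the parity contradiction is immediate.
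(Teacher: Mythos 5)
Your proof is correct and follows essentially the same route as the paper: both arguments locate the cell where a row meets the column it is carried to under the complement-transpose (your fixed cell $(\rho(j),j)$ of $\theta$) and derive the impossible parity relation $2x=37$ there, which the paper phrases as ``$37-x$ would have to occupy the cell already holding $x$''. The only cosmetic difference is that you derive the involutivity of the cell map from the distinctness of the entries, whereas the paper invokes the previously stated fact that $\operatorname{cn}$ is an involution.
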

\begin{proof}
Suppose we used the transposing operation and some row $i$ of $X$ is transformed into some column $j$ of $X$. Consider a number $x = X(i,j)$. Then the number $37-x$ should be in the row $i$ and in the column $j$ too, but the only cell where it can be placed is the intersection of the row $i$ and the column $j$ which is already filled with different number $x$.
\end{proof}

By Lemma~\ref{lemma_trans}, after performing $\operatorname{cn}$ over self-complement squares, rows are transformed into rows, and columns are transformed into columns. Because $\operatorname{cn}$ is involution, the sets of rows and columns can be split into pairs and immovable points under $\operatorname{cn}$. We call two different rows/columns \textit{complementing} if they are transformed one into another. If a row or a column is the immovable point, we call such row/column \textit{self-complement}.

\begin{lemma} \label{lemma_sc}
Any self-complement square $X$ does not have a row and a column which are both self-complement.
\end{lemma}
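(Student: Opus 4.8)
The plan is to trace the effect of the complement-and-normalize operation on a single cell. By Lemma~\ref{lemma_trans}, for a self-complement square $X$ the map $\operatorname{cn}$ uses no transpose, so its normalization step permutes rows among themselves and columns among themselves. I would record this action by a row permutation $\sigma$ and a column permutation $\tau$; since $\operatorname{cn}$ is an involution fixing $X$, the induced maps on rows and columns are involutions, and (by the definitions set up just before the statement) their immovable points are exactly the self-complement rows and self-complement columns. The whole effect of $\operatorname{cn}$ on values is then captured by the single identity $X(\sigma(i),\tau(j)) = 37 - X(i,j)$ for all $i,j$: complementing replaces the value at position $(i,j)$ by $37 - X(i,j)$, and normalizing relocates that value to position $(\sigma(i),\tau(j))$, where it must again agree with $X$.

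With this identity available, I would argue by contradiction. Suppose $X$ has a self-complement row $i$ and a self-complement column $j$. Then $\sigma(i) = i$ and $\tau(j) = j$, so the intersection cell $(i,j)$ is fixed by the combined permutation. Evaluating the identity at this cell gives $X(i,j) = X(\sigma(i),\tau(j)) = 37 - X(i,j)$, hence $2\,X(i,j) = 37$. Since $37$ is odd this equation has no integer solution, contradicting that $X(i,j)$ is one of the numbers $1,\dots,36$. Therefore no self-complement square can possess both a self-complement row and a self-complement column.

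The only genuine content is the parity observation that an entry can never equal its own complement, which is precisely what a doubly self-complement intersection cell would force; everything else is bookkeeping. The step I expect to require the most care is establishing the displayed identity together with the claim that $\sigma$ and $\tau$ act independently on the two indices, so that a cell lying simultaneously in a fixed row and a fixed column is itself fixed. This independence is exactly the no-transpose conclusion of Lemma~\ref{lemma_trans}, so the argument reduces cleanly to the observation that $37$ is odd.
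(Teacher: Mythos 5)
Your proof is correct and follows essentially the same route as the paper: both arguments reduce to the observation that the cell at the intersection of a self-complement row and a self-complement column would have to contain its own complement $37-x$, which is impossible since $37$ is odd. Your permutation bookkeeping with $\sigma$ and $\tau$ is just a more formal phrasing of the paper's remark that $37-x$ would have to lie in both the row and the column, hence at their already-occupied intersection.
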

\begin{proof}
Suppose a row $i$ of $X$ and a column $j$ of $X$ are both self-complement. Consider a number $x = X(i,j)$. Then, similarly to Lemma~\ref{lemma_trans}, number $37-x$ should be in the row $i$ and in the column $j$ both, which is impossible.
\end{proof}

Lemma~\ref{lemma_sc} gives us the following information: in a self-complement semi-magic square either all rows or all columns are split into pairs under $\operatorname{cn}$.

Now we can state the following

\begin{theorem} \label{theo_two}
$|N|$ is divisible by 2.
\end{theorem}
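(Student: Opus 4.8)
The plan is to use the involution $\operatorname{cn}$ to reduce the statement to a parity count of self-complement squares, and then to show that this count is even. Since $\operatorname{cn}\colon N \to N$ is an involution, $N$ decomposes into orbits of size $2$ (pairs $\{X,\operatorname{cn}(X)\}$ with $\operatorname{cn}(X)\ne X$) together with fixed points, and the fixed points are exactly the self-complement squares. Writing $D$ for the set of self-complement squares, this gives $|N| = 2t + |D|$, where $t$ is the number of two-element orbits, so $|N| \equiv |D| \pmod 2$. Thus it suffices to prove that $|D|$ is even.

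To analyze $D$, I would first record the algebraic form of self-complementarity. By Lemma~\ref{lemma_trans} the map $\operatorname{cn}$ uses no transposition on a self-complement square, so for $X \in D$ there are permutations $\rho,\sigma$ of $\{1,\dots,6\}$ with $X(i,j) = 37 - X(\rho(i),\sigma(j))$ for all $i,j$; applying this relation twice and using that the entries are distinct shows that $\rho$ and $\sigma$ are involutions, and that they are uniquely determined by $X$. A row (resp.\ column) is self-complement precisely when it is a fixed point of $\rho$ (resp.\ $\sigma$). By Lemma~\ref{lemma_sc} and the remark following it, at least one of $\rho,\sigma$ is fixed-point-free, so in every $X \in D$ the six rows, or the six columns, break into three complementary pairs.

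The remaining and decisive step is to produce a fixed-point-free involution $\tau$ on $D$, which forces $|D|$ to be even. Here the three-complementary-pair structure supplied by Lemma~\ref{lemma_sc} is the natural raw material: I would use it, together with the normalization tie-break $X(1,2) < X(2,1)$ that singles out a distinguished pair, to define $\tau$ by a prescribed rearrangement of entries within or between complementary pairs that preserves all row and column sums and the self-complement relation, while sending $X$ to a \emph{different} normalized square. One must then verify that $\tau$ lands in $N$, that $\tau\circ\tau = \operatorname{id}$, and --- the crux --- that $\tau(X)\ne X$ for every $X\in D$, treating uniformly the three cases allowed by Lemma~\ref{lemma_sc} (rows paired, columns paired, or both).

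I expect the construction and freeness of $\tau$ to be the main obstacle, and the difficulty is structural. Every symmetry of a semi-magic square --- row and column permutations and transposition (absorbed when passing to $N$), and complementation (absorbed precisely into $\operatorname{cn}$) --- already acts trivially on $N$, so any $\tau$ assembled from these symmetries would be the identity on $N$ and could never be fixed-point-free on $D$. Consequently $\tau$ cannot be a global symmetry; it must be a genuinely combinatorial operation on the pairing data $\rho,\sigma$, or on the value-transversal that encodes a self-complement square, and proving that it never fixes $X$ is exactly where the order-$6$ arithmetic has to be exploited --- for instance that an involution on six columns has an even number of fixed points, and that three complementary pairs, being odd in number, cannot be matched among themselves, so the naive swaps are unavailable. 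A tempting alternative is to split $D$ by type (rows paired versus columns paired) and to match the two parts by a bijection, but transposition does not serve here: it maps a normalized square out of $N$ and normalizes straight back to the same square, so a finer bijection would be needed.
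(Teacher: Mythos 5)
Your reduction of the theorem to the parity of the set $D$ of self-complement squares is exactly the paper's Case~1, and your structural observations (no transposition by Lemma~\ref{lemma_trans}, at least one of the row/column involutions fixed-point-free by Lemma~\ref{lemma_sc}) are correct. But the decisive step --- showing $|D|$ is even --- is only announced, not carried out: you say you ``would'' define a fixed-point-free involution $\tau$ on $D$ and explicitly flag its construction and freeness as the main unresolved obstacle. That is precisely where the content of the proof lies, so as it stands there is a genuine gap.

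The paper closes it with two concrete ideas, neither of which appears in your proposal. First, for a self-complement square in which \emph{both} rows and columns split into three complementary pairs (both involutions fixed-point-free), permuting the rows and the columns into the order $(a,b,c,a,b,c)$ --- same letter meaning a complementary pair --- produces a panmagic square of order $6$; by Lemma~\ref{lemma_pan} no such square exists, so this subset of $D$ is empty. This is not a parity argument at all but a vanishing argument, and it is the step your single-involution plan cannot reproduce. Second, for the remaining squares (those with a self-complement row, after transposing away the column case), one shows there are at least two self-complement rows $i<j$ and exactly two complementary column-pairs $(a,b)$, $(c,d)$ avoiding column~$1$; swapping the entry pair $(X(i,a),X(i,b))$ with $(X(j,a),X(j,b))$, and/or the pair at columns $(c,d)$, preserves all row and column sums (each swapped pair sums to $37$) and yields orbits of size $4$ --- not the size-$2$ orbits of a single involution $\tau$, but a group of two commuting swaps, with a separate column-reordering fix when $i=1$. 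Your closing remarks correctly diagnose why a global symmetry cannot serve as $\tau$, but the paper's resolution is to abandon the search for one involution on all of $D$ and instead kill one subcase outright via the panmagic obstruction and handle the other with entry-level swaps local to the self-complement rows.
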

\begin{proof}
\textbf{Case 1.} All $X \in N$ which are not self-complement can be split into pairs $(X, \operatorname{cn}(X))$, so the number of them is even.

\medskip

\textbf{Case 2.} Any self-complement square $X \in N$ which has no self-complement rows and self-complement columns can be transformed into a panmagic square permuting rows and columns in order $(a,b,c,a,b,c)$, where the same letters represent pairs of complementing rows/columns. By Lemma~\ref{lemma_pan}, the number of such squares is $0$.

\medskip

\textbf{Case 3.} Now we have to consider a set $N' \subseteq N$ of self-complement squares which have either a self-complement row or a self-complement column. Instead we consider a set $N^{*}$ which can be obtained from $N'$ by transposing those squares which have a self-complement column. All squares $X \in N^{*}$ have a self-complement row and now we are not restricted by constraint $X(1,2) < X(2,1)$. We have $|N^{*}| = |N'|$.

Consider $X \in N^{*}$, $X$ has a self-complement row. Then $X$ has at least two self-complement rows, because 5 remaining rows cannot be split into complementary pairs. Let $i$ and $j$, $i < j$, be minimal indices of the self-complement rows. By Lemma~\ref{lemma_sc}, all columns are split into complementary pairs, and exactly two such pairs do not intersect with the first column. Let $(a,b)$ and $(c,d)$ be the indices of these pairs of columns. Then, pairs of numbers $(X(i,a), X(i,b))$, $(X(i,c),X(i,d))$, $(X(j,a),X(j,b))$, and $(X(j,c),X(j,d))$ are complimenting, i.e. sum of the numbers within any of these pairs is $37$.

\begin{itemize}
\item[] \textbf{Case 3a.} If $i>1$, then we can swap the following pairs of numbers: $(X(i,a),X(i,b))$ with $(X(j,a),X(j,b))$, or $(X(i,c),X(i,d))$ with $(X(j,c),X(j,d))$. We can also swap both of these pairs. So, we get 3 another different squares $X_1$, $X_2$ and $X_3$, every of them is a valid semi-normalized semi-magic square from $N^{*}$. Moreover, applying these swaps to any square from the set $\{ X, X_1, X_2, X_3 \}$ produces three remaining squares. So, the number of squares of such kind is divisible by 4.

\item[] \textbf{Case 3b.} If $i=1$, then $a=2$, $b=5$, $c=3$ and $d=4$, because the first row is self-complement and numbers in it are sorted in ascending order. We can swap the same pairs as in case 3a, but after that we have to permute columns to get the first row to be sorted. Luckily, it is always possible by swapping columns $a$ with $b$, $c$ with $d$, and $(a,b)$ with $(c,d)$. And again, we have a set of 4 squares from $N^{*}$, any of them will be transformed into another one from this set, so the number of squares of such kind is also divisible by 4.
\end{itemize}

Thus, $|N^{*}|$ is divisible by 4, which implies $|N'|$ is divisible by 4 too.
\end{proof}

\begin{cor}
$|C|$ is divisible by 4.
\end{cor}

One may try to expand the proof of Theorem~\ref{theo_two} to prove that $|N|$ is divisible by $4$. To do that we may try to split pairs $(X, \operatorname{cn}(X))$ from Case 1 into pairs of pairs by some swapping operations like ones such in Cases 3a and 3b. But it seems like simple swapping operations do not work here because there exists the following semi-magic square found by a computer search:
$$
K = \left[\begin{matrix}
 1 & 9 & 14 & 20 & 32 & 35 \\
10 & 23 & 36 &  7 & 16 & 19 \\
15 & 33 & 24 &  5 & 12 & 22 \\
26 & 11 &  3 & 34 & 31 &  6 \\
29 & 27 & 21 & 28 &  2 &  4 \\
30 &  8 & 13 & 17 & 18 & 25 \\
\end{matrix}\right]
$$

It has no pairs of numbers that can be swapped, i.e. for all $i \ne j$, and all $a \ne b$, we have $K(i,a)+K(j,a) \ne K(i,b)+K(j,b)$ and $K(i,a)+K(i,b) \ne K(j,a)+K(j,b)$. It also has no triples that can be swapped, i.e. for all $i,j,k$ ($i \ne j$, $i \ne k$, $j \ne k$), and all $a \ne b$, we have $K(i,a)+K(j,a)+K(k,a) \ne K(i,b)+K(j,b)+K(k,b)$ and $K(a,i)+K(a,j)+K(a,k) \ne K(b,i)+K(b,j)+K(b,k)$. Moreover, $K$ does not have a configuration which can be swapped in the following way:
$$
\left[\begin{matrix}
\alpha+C & & \beta \\
\alpha & \beta+C & \\
 & \gamma & \gamma+C \\
\end{matrix}\right]
\to
\left[\begin{matrix}
\alpha & & \beta+C \\
\alpha+C & \beta & \\
 & \gamma+C & \gamma \\
\end{matrix}\right]
$$

The latter swapping scheme can be defined more formally: for all pairwise distinct $i,j,k$, and all pairwise distinct $a,b,c$, the equality $K(i,a)-K(j,a)=K(j,b)-K(i,c)=K(k,c)-K(k,b)$ does not hold. Furthermore, this scheme does not work even if we rearrange $\alpha$, $\beta$ and $\gamma$ in the following ways:
$$
\left[\begin{matrix}
\alpha & & \gamma \\
\alpha & \beta & \\
 & \beta & \gamma \\
\end{matrix}\right]
\quad
\left[\begin{matrix}
\alpha & & \gamma \\
\beta & \beta & \\
 & \alpha & \gamma \\
\end{matrix}\right]
\quad
\left[\begin{matrix}
\alpha & & \beta \\
\gamma & \beta & \\
 & \alpha & \gamma \\
\end{matrix}\right]
\quad
\left[\begin{matrix}
\alpha & & \alpha \\
\beta & \beta & \\
 & \gamma & \gamma \\
\end{matrix}\right]
\quad
\left[\begin{matrix}
\alpha & & \alpha \\
\gamma & \beta & \\
 & \beta & \gamma \\
\end{matrix}\right]
$$

The squares of such kind are very rare. We were able to find only three more examples after a day of search:
$$\small
\left[\begin{matrix}
 1 &  4 &  8 & 29 & 33 & 36 \\
13 & 25 & 32 & 30 &  5 &  6 \\
16 & 21 & 31 &  7 & 19 & 17 \\
26 & 24 & 22 & 10 & 11 & 18 \\
27 & 35 &  3 & 23 &  9 & 14 \\
28 &  2 & 15 & 12 & 34 & 20 \\
\end{matrix}\right]
\quad
\left[\begin{matrix}
 1 &  7 & 16 & 22 & 30 & 35 \\
10 &  9 & 34 & 27 & 29 &  2 \\
18 & 32 &  4 & 23 &  3 & 31 \\
20 & 33 & 21 & 14 & 15 &  8 \\
26 & 25 & 17 & 13 &  6 & 24 \\
36 &  5 & 19 & 12 & 28 & 11 \\
\end{matrix}\right]
\quad
\left[\begin{matrix}
 1 &  8 & 19 & 22 & 27 & 34 \\
14 & 21 & 29 & 18 &  3 & 26 \\
16 & 20 &  6 &  5 & 33 & 31 \\
23 & 24 & 12 & 36 &  9 &  7 \\
25 & 28 & 30 & 13 &  4 & 11 \\
32 & 10 & 15 & 17 & 35 &  2 \\
\end{matrix}\right]
$$

\section{Probabilistic estimation} \label{sec_prob}

For the probabilistic estimation of $|Q|$ we used a method introduced by Donald Ervin Knuth~\cite{Knuth75} in a very straightforward way. We fill the square in the following order:
$$
\left[\begin{matrix}
a_1 & a_2 & a_3 & a_4 & a_5 & a_6 \\
b_1 & c_1 & c_2 & c_3 & c_4 & c_5 \\
b_2 & c_6 & c_{10} & c_{11} & c_{12} & c_{13} \\
b_3 & c_7 & d_1 & d_4 & d_5 & d_6 \\
b_4 & c_8 & d_2 & d_7 & d_9 & d_{10} \\
b_5 & c_9 & d_3 & d_8 & d_{11} & d_{12} \\
\end{matrix}\right]
$$

Let $S_1$ be a set of series of order $6$ that contain number $1$, $|S_1|=4\,739$.

First, we iterate over $S_1$ and fill cells $a_1,\cdots,a_6$. Then, we iterate over $S_1$ again and fill cells $b_1,\cdots,b_5$. Of course, on the second step we consider only those series which have no common elements with the already placed numbers in cells $a_1,\cdots,a_6$ except number $1$. We also consider only those possibilities where $a_1 < a_2 < \cdots < a_6$, $a_1 < b_1 < \cdots < b_5$ and $a_2 < b_1$. It gives $4\,531\,580$ ways to fill cells $a_1,\cdots,b_5$.

For each of $4\,531\,580$ ways, we randomly fill cells $c_1,\cdots,c_{13}$ by the remaining numbers in order of indices, i.e. first, we fill cell $c_1$ by one of $25$ remaining numbers (they all have equal probability to be chosen), then $c_2$ by one of $24$ remaining numbers and so on. For cells $c_5$, $c_9$ and $c_{13}$ we have at most one way to fill. This part gives us a probabilistic factor
$$F = 25 \times 24 \times 23 \times 22 \times 20 \times 19 \times 18 \times 16 \times 15 \times 14 = 6\,977\,456\,640\,000.$$

Finally, we try to fill cells $d_1,\cdots,d_{12}$ by the remaining numbers in all possible ways, iterating over several nested loops.

Let the random variable $\xi$ be the number of semi-magic squares that is computed by the described algorithm. Then
$|Q| = \frac{(6!)^2}{4} F \operatorname{E}[\xi]$.

To estimate $\operatorname{E}[\xi]$ we consider another random variable $\xi' = \frac{1}{n} \sum_{i=1}^n \xi_i$, where $\xi_1, \cdots , \xi_n$ are independent copies of $\xi$. Then, by the central limit theorem $\xi'$ approximates the normal distribution with mean $\operatorname{E}[\xi]$ and variance $\operatorname{Var}[\xi]/n$ when $n \to \infty$. Let $\xi^*_1, \cdots , \xi^*_n$ be measures of random variables $\xi_1, \cdots , \xi_n$, then $\operatorname{E}[\xi'] \approx \frac{1}{n} \sum_{i=1}^n \xi^*_i$ and $\operatorname{Var}[\xi'] \approx
\frac{1}{n}\left( \frac{1}{n} \sum_{i=1}^n (\xi^*_i)^2 - \left(\frac{1}{n} \sum_{i=1}^n \xi^*_i \right)^2 \right)$.

\section{Results}\label{sec_results}

We implemented the algorithm described in Section~\ref{sec_exact} and computed the total number of canonical semi-magic squares of order $6$:
$$|C| = 1\,459\,732\,411\,194\,444\,392 = 2^3 \times 41 \times 4\,450\,403\,692\,665\,989.$$

The result has been double-checked, i.e. for every class we have at least two runs with the same answer. It has been partially confirmed by a similar program implemented by Walter Trump. Since that program is much slower, we only used it to double-check about $1\,000$ classes of $9\,366\,138$ chosen at random. Also, the number we have got is divisible by $4$; it matches our theoretical result from Section~\ref{sec_theo}.

To handle all classes we split them into $100$ jobs. All classes $m \in M'$ were ordered by value $\sum_{i \in m}2^{36-i}$. Job $i \in \{ 0, 1, \cdots , 99 \}$ contains all classes with ids $i$ modulo $100$ from the ordered list. The total number of semi-magic squares for every job can be found in Table~\ref{table_jobs}. We also present the number of semi-magic squares for several selected classes in Table~\ref{table_classes}. One may use this data for independent verification.

The program was run on machines with the following specifications: Intel Core i7-5820K 4.2GHz with 6 cores 32Gb DDR4 RAM and Intel Core i7-3770 3.4GHz with 4 cores 12Gb DDR3 RAM. The double-check was done with help of machines of Walter Trump, which were two nearly identical computers (Intel Core i7-8700K 3.7GHz with 6 cores and 16Gb DDR4 RAM each). Running one job on the i7-5820K in one thread takes about $200$ hours, while running it in $10$ threads takes about $370$ hours because of heavy utilization of RAM bandwidth. Nevertheless, the average time per job decreases from $200$ to just $37$ hours, so this approach is acceptable. DDR4 memory is very important here, because on our other machine i7-3770 with DDR3 memory we had no significant improvement of the average time by running the program in more than $4$ threads. The total time of the computation was about $5$ months of real time (about $5$ CPU core-years), for the double-check we have spent another $3$ months of real time.

After multiplying by $(6!)^2/8$ we get the total number of semi-magic squares of order $6$ up to reflections and rotations:
$$|Q| = 94\,590\,660\,245\,399\,996\,601\,600.$$

We also implemented the probabilistic algorithm described in Section~\ref{sec_prob}. We have done $10^7$ measures (it has taken about 1 month of real time) and got the following estimation:
$$(0.9459104 \pm 0.0000072) \times 10^{23},$$
where the range corresponds to $3 \sigma$ error (the exact value should be within the range with probability $99.7\%$).
We used RdRand here --- an Intel on-chip hardware random number generator which provides a high quality pseudo-random number sequence. Our exact result fits into the probabilistic estimated range.

\section{Conclusion} \label{sec_conc}

We have calculated the number of semi-magic squares of order 6. We will be grateful if someone confirmed our result.

We have proved that the number of canonical semi-magic squares is divisible by 4, which matches our computational results. Since our computed number is divisible by 8, it would be interesting if one found an analytic proof for this constraint.

We have also given the probabilistic estimation of the desired number which matches the exact result. Our estimation differs from the estimation
$(0.9459156 \pm 0.0000051) \times 10^{23}$
by Walter Trump, and we attribute the difference to the better quality of the used random generator. Perhaps, it is a good idea to revise the other estimated values on~\cite{trump2018}.

We hope that our ideas may be applied to computing the number of magic squares of order 6, panmagic and associative magic squares of order 7.

\section*{Acknowledgments}
The author would like to thank Walter Trump for productive discussions and provided computing power.

\bibliographystyle{plain}

\newpage

\appendix
\section{Appendix}

\begin{center}
\refstepcounter{listingcounter}
Listing~\arabic{listingcounter}: Implementation of sorting six integers using SSE \label{list_sse}
\end{center}
\begin{footnotesize}
\begin{verbatim}
inline __m128i simd_sort_6( __m128i vec )
{
  __m128i passA_shu = _mm_set_epi32( 0x0F0E0C0D, 0x0A0B0809, 0x07060405, 0x02030001 );
  __m128i passA_add = _mm_set_epi32( 0x0F0E0D0D, 0x0B0B0909, 0x07060505, 0x03030101 );

  __m128i pass2_shu = _mm_set_epi32( 0x0F0E0B09, 0x0D080C0A, 0x07060301, 0x05000402 );
  __m128i pass2_add = _mm_set_epi32( 0x0F0E0D0C, 0x0D0A0C0A, 0x07060504, 0x05020402 );
  __m128i pass2_and = _mm_set_epi32( 0x0000FEFD, 0xFEFEFDFE, 0x0000FEFD, 0xFEFEFDFE );

  __m128i pass4_shu = _mm_set_epi32( 0x0F0E0D0B, 0x0C090A08, 0x07060503, 0x04010200 );
  __m128i pass4_add = _mm_set_epi32( 0x0F0E0D0C, 0x0C0A0A08, 0x07060504, 0x04020200 );

  __m128i tmp = vec;
  tmp = _mm_shuffle_epi8( tmp, passA_shu );
  tmp = _mm_cmpgt_epi8( tmp, vec );
  tmp = _mm_add_epi8( tmp, passA_add );
  vec = _mm_shuffle_epi8( vec, tmp );

  tmp = vec;
  tmp = _mm_shuffle_epi8( tmp, pass2_shu );
  tmp = _mm_cmpgt_epi8( tmp, vec );
  tmp = _mm_and_si128( tmp, pass2_and );
  tmp = _mm_add_epi8( tmp, pass2_add );
  vec = _mm_shuffle_epi8( vec, tmp );

  tmp = vec;
  tmp = _mm_shuffle_epi8( tmp, passA_shu );
  tmp = _mm_cmpgt_epi8( tmp, vec );
  tmp = _mm_add_epi8( tmp, passA_add );
  vec = _mm_shuffle_epi8( vec, tmp );

  tmp = vec;
  tmp = _mm_shuffle_epi8( tmp, pass4_shu );
  tmp = _mm_cmpgt_epi8( tmp, vec );
  tmp = _mm_add_epi8( tmp, pass4_add );
  vec = _mm_shuffle_epi8( vec, tmp );

  tmp = vec;
  tmp = _mm_shuffle_epi8( tmp, passA_shu );
  tmp = _mm_cmpgt_epi8( tmp, vec );
  tmp = _mm_add_epi8( tmp, passA_add );
  vec = _mm_shuffle_epi8( vec, tmp );

  return vec;
}
\end{verbatim}
\end{footnotesize}

\begin{table}[h]
\caption{The number of semi-magic squares for jobs 0 to 99}
\label{table_jobs}\centering\small
\begin{tabular}{ccccc}
$n$ & job $n$ & job $25+n$ & job $50+n$ & job $75+n$ \\
\hline
0 & 14595299890506839 & 14595316001632561 & 14603838454980554 & 14597617136058905 \\
1 & 14601348238542799 & 14592600020669980 & 14597154681426208 & 14594419938500704 \\
2 & 14595122249331594 & 14590850126391476 & 14601782089471209 & 14594598827789990 \\
3 & 14596940214796891 & 14598743550103225 & 14598589163885555 & 14593417451849138 \\
4 & 14599919524740283 & 14603418585477771 & 14594645029824771 & 14594675566340127 \\
5 & 14599481526331908 & 14603295648952067 & 14595698031939367 & 14591859025451015 \\
6 & 14595366939968279 & 14601139532864673 & 14593315664487712 & 14595575761538302 \\
7 & 14599362024620115 & 14600180402339806 & 14593428111610963 & 14592695360904517 \\
8 & 14594266170470618 & 14598025045682938 & 14599924315304908 & 14592169760616029 \\
9 & 14597824058621866 & 14597136739848778 & 14596189825398666 & 14598850078311321 \\
10 & 14600702140816706 & 14597243580577133 & 14592672834493575 & 14593855470845651 \\
11 & 14598132209236365 & 14599552119329093 & 14598996127788442 & 14599515062043516 \\
12 & 14592329176974714 & 14595101645250647 & 14596387461107846 & 14597230998596059 \\
13 & 14598864956059963 & 14597768567499825 & 14594417291677297 & 14597278947677300 \\
14 & 14598924898518133 & 14594951262878059 & 14596115008722879 & 14598949230358493 \\
15 & 14596661042596811 & 14600885826046430 & 14598173733076450 & 14597608036189274 \\
16 & 14599282408599837 & 14601628694546057 & 14602391956597491 & 14598881916536282 \\
17 & 14600280769914826 & 14598596316204762 & 14603887096543285 & 14597382946944130 \\
18 & 14598823867217825 & 14600320062542234 & 14606754521682396 & 14598581293799681 \\
19 & 14599601669562344 & 14595340485332975 & 14601092008081221 & 14595897627618200 \\
20 & 14596641892207354 & 14592870870280045 & 14604383826739901 & 14598830439857727 \\
21 & 14593383204809574 & 14595079495912035 & 14600568480441382 & 14600551999569527 \\
22 & 14591809933918280 & 14595544799219361 & 14597303972141776 & 14595836255721595 \\
23 & 14593786182687448 & 14590534399923597 & 14593049907192328 & 14596385077066818 \\
24 & 14593683315820057 & 14597572613115214 & 14600364589410945 & 14597087900740793 \\
\end{tabular}
\end{table}
\begin{table}[h]
\caption{The number of semi-magic squares for several classes}
\label{table_classes}\centering\small%
\begin{tabular}{rcr}
id & $m$ & \# of squares \\
\hline
$0$ & $\{ 1,2,3,4,5,6,7,8,9,28,29,30,31,32,33,34,35,36 \}$ & $1314107173695$ \\
$1$ & $\{ 1,2,3,4,5,6,7,8,10,27,29,30,31,32,33,34,35,36 \}$ & $1211955692745$ \\
$2$ & $\{ 1,2,3,4,5,6,7,8,11,26,29,30,31,32,33,34,35,36 \}$ & $1102582841644$ \\
$9478$ & $\{ 1,2,3,4,5,6,7,20,21,22,24,26,29,30,31,33,34,35 \}$ & $218686822793$ \\
$428467$ & $\{ 1,2,3,4,5,10,11,16,17,22,23,27,28,29,30,34,35,36 \}$ & $274746381460$ \\
$511018$ & $\{ 1,2,3,4,5,11,12,16,17,19,23,25,28,30,32,34,35,36 \}$ & $278709251527$ \\
$881323$ & $\{ 1,2,3,4,6,8,14,16,20,22,23,25,27,28,31,32,35,36 \}$ & $215381510567$ \\
$1444751$ & $\{ 1,2,3,4,7,9,12,14,15,20,23,27,29,31,32,33,35,36 \}$ & $224462276339$ \\
$1555605$ & $\{ 1,2,3,4,7,10,13,14,17,21,22,27,28,30,31,33,34,36 \}$ & $223201197661$ \\
$1863531$ & $\{ 1,2,3,4,8,9,13,15,19,20,21,25,29,30,31,32,35,36 \}$ & $258672728144$ \\
$2415714$ & $\{ 1,2,3,4,9,12,14,17,19,21,22,25,27,28,29,30,34,36 \}$ & $209576369917$ \\
$2932382$ & $\{ 1,2,3,5,6,7,11,15,19,20,23,26,29,30,32,33,35,36 \}$ & $113327661905$ \\
$4382984$ & $\{ 1,2,3,5,8,12,13,16,17,21,22,23,27,29,31,32,35,36 \}$ & $122559507765$ \\
$5475946$ & $\{ 1,2,3,6,7,10,12,14,16,20,23,28,29,30,31,32,34,35 \}$ & $114247796626$ \\
$5638250$ & $\{ 1,2,3,6,7,12,13,15,18,19,22,25,27,29,31,32,35,36 \}$ & $105541186882$ \\
$5949121$ & $\{ 1,2,3,6,8,10,13,16,20,21,23,25,26,29,30,31,33,36 \}$ & $127529946811$ \\
$7169678$ & $\{ 1,2,3,7,8,11,12,16,21,22,23,24,25,26,29,32,35,36 \}$ & $118262067011$ \\
$7392628$ & $\{ 1,2,3,7,9,10,12,14,19,20,21,27,28,30,31,32,33,34 \}$ & $114958512231$ \\
$8039539$ & $\{ 1,2,3,7,13,14,16,18,19,20,22,24,25,26,28,30,31,34 \}$ & $140013241433$ \\
$9366137$ & $\{ 1,2,3,14,15,16,18,19,20,21,22,23,24,25,26,27,28,29 \}$ & $25787950205$ \\
\end{tabular}
\end{table}

\end{document}